\newtheorem{lemma}{Lemma}
\newtheorem{thmm}[lemma]{Theorem}
\newtheorem{corr}[lemma]{Corollary}
\theoremstyle{definition}
\newtheorem*{defi}{Definition}
\begin{document}
\doublespacing

\title[Note on the group-theoretic approach to fast matrix multiplication]{A note on the group-theoretic approach to\\fast matrix multiplication}

\author{Ivo Hedtke}
\address{Mathematical Institute, University of Jena, D-07737 Jena, Germany}
\thanks{The author was supported by the Studienstiftung des Deutschen
Volkes.}
\email{Ivo.Hedtke@uni-jena.de}
\subjclass[2000]{20D60, 68Q17, 68R05}
\keywords{Triple Product Property, Fast Matrix Multiplication, Group Rings}

\maketitle

\begin{abstract}
In 2003 \textsc{Cohn} and \textsc{Umans} introduced a group-theoretic approach to fast matrix multiplication.
This involves finding large subsets $S$, $T$ and $U$ of a group $G$ satisfying the Triple Product Property (TPP) as a means to bound the exponent $\omega$ of the matrix multiplication. We show that $S$, $T$ and $U$ may be be assumed to contain the identity and be otherwise disjoint. We also give a much shorter proof of the upper bound $|S|+|T|+|U|\leq |G|+2$.
\end{abstract}

\section{Introduction}
\noindent The naive algorithm for matrix multiplication is an $\mathcal 
O(n^3)$ algorithm. From \textsc{Volker Strassen} (\cite{Strassen}) we know that there is an $\mathcal
O(n^{2.81})$ algorithm for this problem.
\textsc{Winograd} optimized
\textsc{Strassen}'s algorithm. While the \textsc{Strassen-Winograd} algorithm is
the variant that is always implemented (for example in the famous GEMMW package),
there are faster ones (in theory) that are impractical to implement. The fastest
known algorithm runs in $\mathcal O(n^{2.376})$ time (see \cite{Coppersmith} from \textsc{Don Coppersmith} and \textsc{Shmuel Winograd}). Most researchers believe that an optimal
algorithm with $\mathcal O(n^2)$ runtime exists, but since 1987 no further
progress was made in finding one.

Because modern architectures have complex memory hierarchies and increasing
parallelism, performance has become a complex tradeoff, not just a simple matter
of counting flops. Algorithms which make use of this technology were described in \cite{Alberto} by \textsc{D'Alberto} and  \textsc{Nicolau}. An
also well known method is \emph{Tiling}: The normal algorithm can be speeded up
by a factor of two by using a six loop implementation that blocks submatrices so
that the data passes through the L1 Cache only once.

In 2003 \textsc{Cohn} and \textsc{Umans} introduced in \cite{Cohn} a group-theoretic approach to fast matrix multiplication. The main idea is to embed the matrix multiplication over a ring $R$ into the group ring $RG$, where $G$ is a (finite) group. A group $G$ admits such an embedding, if there are subsets $S$, $T$ and $U$ which fulfill the so called \emph{Triple Product Property}. 

\begin{defi}[Right Quotient]
Let $G$ be a group and $\emptyset\neq X \subseteq G$ be a nonempty subset of $G$. The \emph{right quotient} $Q(X)$ of $X$ is defined by $Q(X):=\{xy^{-1} : x,y \in X\}$.
\end{defi}

\begin{defi}[TPP]
We say that the nonempty subsets $S$, $T$, and $U$ of a group $G$ fulfill the \emph{Triple Product Property} (TPP) if for $s\in Q(S)$, $t\in Q(T)$ and $u \in Q(U)$,
$stu=1$ holds iff $s=t=u=1$.
\end{defi}

\textsc{Cohn} and \textsc{Umans} found a way to bound the exponent $\omega$ of the matrix multiplication with their framework.
Therefore, for a fixed group $G$ we search for TPP triples $S$, $T$ and $U$ which maximize $|S|\cdot |T|\cdot |U|$, for example with a brute force computer search. Here one can use \textsc{Murthy}'s upper bound (s. Corollary \ref{KOR}) and our intersection condition (s. Theorem \ref{MainResult}).

\section{Results}

\noindent We show that $S$, $T$ and $U$ may be be assumed to contain the identity and be otherwise disjoint.

\begin{thmm}\label{MainResult}
If $S'$, $T'$ and $U'$ fulfill the TPP, then there exists a triple $S$, $T$ and $U$ with
\[
|S|=|S'|, \quad |T|=|T'|, \quad |U|=|U'| \qquad and \qquad S\cap T = T \cap U = S \cap U= 1
\]
which also fulfills the TPP.
\end{thmm}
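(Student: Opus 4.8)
The plan is to exploit a single invariance property of the TPP, namely that the right quotient $Q(X)$ is unchanged when $X$ is translated on the right. Indeed, for any $a \in G$ one has $Q(Xa) = \{(xa)(ya)^{-1} : x,y \in X\} = \{xy^{-1} : x,y \in X\} = Q(X)$. Since the TPP depends on $S'$, $T'$, $U'$ only through the quotients $Q(S')$, $Q(T')$, $Q(U')$, the property is preserved under independent right translations of the three sets, and such translations obviously preserve cardinalities. This is the key observation; everything else is a short deduction.

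First I would fix elements $s_0 \in S'$, $t_0 \in T'$, $u_0 \in U'$ and set $S := S' s_0^{-1}$, $T := T' t_0^{-1}$, and $U := U' u_0^{-1}$. By the invariance above, $S$, $T$, $U$ satisfy the TPP and have the same cardinalities as the primed sets. Moreover $1 = s_0 s_0^{-1} \in S$, and likewise $1 \in T$ and $1 \in U$, so after this normalization all three sets contain the identity.

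It then remains to check that the pairwise intersections reduce to $\{1\}$. Suppose $g \in S \cap T$. Because $1 \in S$ we have $g = g\,1^{-1} \in Q(S)$; because $1 \in T$ we have $g^{-1} = 1\,g^{-1} \in Q(T)$; and $1 \in Q(U)$ since $1 \in U$. Taking $s = g$, $t = g^{-1}$, $u = 1$ gives $stu = g g^{-1} = 1$, so the TPP forces $s = t = u = 1$, i.e.\ $g = 1$. The cases $T \cap U$ and $S \cap U$ are handled identically, placing the nontrivial element and its inverse in the two quotients being intersected and the identity in the third; in each case the product telescopes to $1$ and the TPP collapses $g$ to the identity. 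Hence $S \cap T = T \cap U = S \cap U = \{1\}$.

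I do not expect a genuine obstacle here: the whole argument hinges on spotting the translation invariance of $Q$, after which the normalization and the intersection computation are routine. The only point requiring a little care is bookkeeping the order of factors in $stu$, so that for each of the three intersections the nontrivial element lands in the correct quotient (the first factor for $S$, the second for $T$, the third for $U$) in order to make the product collapse to the identity.
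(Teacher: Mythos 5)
Your proposal is correct and takes essentially the same approach as the paper: the identical right-translation normalization $S := S's_0^{-1}$, $T := T't_0^{-1}$, $U := U'u_0^{-1}$, justified by the invariance $Q(Xa)=Q(X)$, which preserves both the TPP and the cardinalities while putting $1$ in each set. The only difference is cosmetic: where the paper finishes by invoking its Theorem \ref{SatzSchnittZweier} (\textsc{Murthy}'s minimal disjointness property, $|X\cap Y|\leq 1$), you verify the pairwise intersections directly by exhibiting $g\in Q(S)$, $g^{-1}\in Q(T)$, $1\in Q(U)$ (and its two analogues) and applying the TPP, which is a correct, self-contained rendering of the same underlying argument.
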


For the proof of our main result we need some auxiliary results.

\begin{lemma}\label{lemm:BruteRechts}
Let $\emptyset \neq X\subseteq G$ be a nonempty subset of a group $G$ and $g\in G$. Then
\begin{enumerate}
\item $1\in Q(X)$,
\item $g \in Q(X) \Leftrightarrow g^{-1} \in Q(X)$ and
\item $|X| \leq |Q(X)|$.
\end{enumerate}
\end{lemma}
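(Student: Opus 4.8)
The plan is to treat the three parts in turn, each following directly from the definition $Q(X)=\{xy^{-1}:x,y\in X\}$ together with the group axioms. For part (1), since $X$ is nonempty I would pick any element $x\in X$ and observe that $xx^{-1}=1$, which exhibits the identity as a right quotient; hence $1\in Q(X)$.

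For part (2), I would start from $g\in Q(X)$, so that $g=xy^{-1}$ for some $x,y\in X$, and compute $g^{-1}=(xy^{-1})^{-1}=yx^{-1}$. This is again of the required form, merely with the roles of the two elements of $X$ interchanged, so $g^{-1}\in Q(X)$. The reverse implication is symmetric: applying the same argument to $g^{-1}$ (whose inverse is $g$) yields the converse, and the stated equivalence follows.

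For part (3), which is the only part requiring an actual idea, the key is to fix one coordinate rather than let both vary. I would choose any $y_0\in X$ (possible since $X\neq\emptyset$) and consider the map $\varphi\colon X\to Q(X)$ defined by $\varphi(x)=xy_0^{-1}$. This lands in $Q(X)$ precisely because $y_0\in X$, and it is injective since right multiplication by $y_0^{-1}$ is a bijection of $G$: from $xy_0^{-1}=x'y_0^{-1}$ one cancels to get $x=x'$. An injection from $X$ into $Q(X)$ immediately gives $|X|\le|Q(X)|$.

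The inequality in part (3) is the main obstacle, though a mild one. The natural-looking but wrong approach is to bound $|Q(X)|$ by counting all pairs $(x,y)\in X\times X$, which overcounts and points in the wrong direction. The clean route, as above, is to restrict the second coordinate to a single fixed $y_0$, which turns the defining construction of $Q(X)$ into an honest injection emanating from $X$ and thereby delivers the bound directly.
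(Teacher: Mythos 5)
Your proposal is correct and matches the paper's proof in all three parts: the same choice of $x x^{-1}=1$ for (1), the same inversion computation $(xy^{-1})^{-1}=yx^{-1}$ for (2), and the same injection $x\mapsto xy_0^{-1}$ with one coordinate fixed for (3) (the paper fixes the other coordinate, $y\mapsto yx^{-1}$, which is the identical idea up to notation). Your explicit handling of the converse direction in (2) is a small addition of care; the paper leaves it implicit by the same symmetry you invoke.
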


\begin{proof}
\begin{enumerate}
\item Because $X\neq \emptyset$ there exists an $x \in X$ and so $1=xx^{-1}\in Q(X)$ follows.
\item If $g\in Q(X)$ then there are $x,y\in X$ with $g=xy^{-1}$. This implies, that $g^{-1}=(xy^{-1})^{-1}=yx^{-1} \in Q(X)$.
\item For a fixed $x \in X$ the map $X \to Q(X)$, $y\mapsto yx^{-1}$ is injective and therefore $|X| \leq |Q(X)|$ holds.\qedhere
\end{enumerate}
\end{proof}

\begin{lemma}\label{lemm:SchnittQ}
If $S$, $T$ and $U$ fulfill the TPP then \[Q(X)\cap Q(Y)=1\] holds for all $X\neq Y \in \{S,T,U\}$.
\end{lemma}

\begin{proof}
We know $1\in Q(X)\cap Q(Y)$ from Lemma \ref{lemm:BruteRechts}(1). Now assume that $|Q(X)\cap Q(Y)| \geq 2$. In this case there is an $1\neq x \in Q(X) \cap Q(Y)$. From Lemma \ref{lemm:BruteRechts}(2) we know, that $x^{-1} \in Q(X) \cap Q(Y)$, too. Moreover 1 is an element of every right quotient and therefore the factors $x$, $x^{-1}$ and $1$ occur in $\{stu : s\in Q(S), t\in Q(T), u \in Q(U)\}$ and the TPP is not fulfilled. So we have $|Q(X)\cap Q(Y)| = 1$ which completes the proof.
\end{proof}

Theorem \ref{SatzSchnittZweier} and Corollary \ref{KOR} below are originally due to \textsc{Murthy} (2009). Our proofs are somewhat shorter.

\begin{thmm}[\textsc{Murthy}'s minimal disjointness property]\label{SatzSchnittZweier}
If $S$, $T$ and $U$ fulfill the TPP then \[|X \cap Y|\leq 1\] holds for all $X\neq Y \in \{S,T,U\}$.
\end{thmm}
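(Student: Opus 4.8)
The plan is to argue by contradiction and reduce the claim to Lemma \ref{lemm:SchnittQ}, which already pins down the intersection of two right quotients. First I would fix distinct $X, Y \in \{S,T,U\}$ and suppose that $|X \cap Y| \geq 2$, aiming to manufacture a \emph{nontrivial} element living simultaneously in $Q(X)$ and $Q(Y)$, which is exactly what Lemma \ref{lemm:SchnittQ} forbids.

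The key observation I would record is the monotonicity of the right-quotient operation: whenever $Z \subseteq W$, the definition $Q(Z) = \{xy^{-1} : x,y \in Z\}$ immediately gives $Q(Z) \subseteq Q(W)$. Applying this to the two inclusions $X \cap Y \subseteq X$ and $X \cap Y \subseteq Y$ yields
\[
Q(X \cap Y) \subseteq Q(X) \cap Q(Y).
\]
Since $X \cap Y$ is nonempty (it has at least two elements by assumption), Lemma \ref{lemm:BruteRechts}(3) applies and gives $|Q(X \cap Y)| \geq |X \cap Y| \geq 2$, so the left-hand side above already contains at least two elements.

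Combining these two facts forces $|Q(X) \cap Q(Y)| \geq 2$, directly contradicting Lemma \ref{lemm:SchnittQ}, which asserts that this intersection is the trivial set $\{1\}$. This contradiction establishes $|X \cap Y| \leq 1$. For the write-up I might even bypass the monotonicity packaging and argue more concretely: picking two distinct elements $a \neq b$ in $X \cap Y$, the element $ab^{-1}$ lies in both $Q(X)$ and $Q(Y)$ and satisfies $ab^{-1} \neq 1$, again contradicting Lemma \ref{lemm:SchnittQ}. I do not anticipate a genuine obstacle here: once Lemma \ref{lemm:SchnittQ} is available the statement is a one-line deduction, and the only point needing a moment's care is verifying that two distinct elements of the intersection really produce a nonidentity element of each right quotient.
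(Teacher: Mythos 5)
Your proposal is correct and follows essentially the same route as the paper: the paper's proof picks $x \neq y \in X \cap Y$ and notes $1 \neq xy^{-1} \in Q(X) \cap Q(Y)$, contradicting Lemma \ref{lemm:SchnittQ}, which is exactly your ``concrete'' variant. The monotonicity packaging via $Q(X \cap Y) \subseteq Q(X) \cap Q(Y)$ and Lemma \ref{lemm:BruteRechts}(3) is a harmless reformulation of the same one-line deduction.
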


\begin{proof}
Assume that $|X \cap Y|\geq 2$. Then there are $x\neq y \in X \cap Y$. Therefore we have $1\neq xy^{-1}\in Q(X) \cap Q(Y)$. This is a contradiction to Lemma \ref{lemm:SchnittQ}.
\end{proof}

Now we can prove our main result.
\begin{proof}[Proof of Theorem \ref{MainResult}]
We fix $s_0\in S'$, $t_0\in T'$ and $u_0\in U'$. Now we define $S:=\{ss_0^{-1} : s\in S'\}$ and $T$ and $U$ in the same way. Obviously $|S|=|S'|$, $|T|=|T'|$ and $|U|=|U'|$ holds. Because of
\[
Q(S)=\{s\tilde s^{-1} : s,\tilde s \in S\}=\{ss_0^{-1}(\tilde ss_0^{-1})^{-1}: s,\tilde s \in S'\}=\{s\tilde s^{-1} : s,\tilde s \in S'\}=Q(S'),\]
$Q(T)=Q(T')$ and $Q(U)=Q(U')$ the triple $S$, $T$ and $U$ fulfill the TPP, too. It is also clear, that $1\in S$, $1\in T$ and $1\in U$. The result now follows from Theorem~\ref{SatzSchnittZweier}.
\end{proof}

Finally we can prove the upper bound of $|G| + 2$ for the additive size of a TPP triple.

\begin{thmm}\label{satz:OrderQ}
If $S$, $T$ and $U$ fulfill the TPP then $|Q(S)|+|Q(T)|+|Q(U)| \leq |G| + 2$.
\end{thmm}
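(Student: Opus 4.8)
The plan is to exhibit $Q(S)$, $Q(T)$, and $Q(U)$ as three subsets of $G$ that overlap only in the identity, and then simply count. By Lemma~\ref{lemm:BruteRechts}(1) each of the three right quotients contains $1$, and by Lemma~\ref{lemm:SchnittQ} any two of them meet exactly in $\{1\}$. These two facts are the entire input; the rest is bookkeeping.

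The key observation I would record is that the punctured sets $Q(S)\setminus\{1\}$, $Q(T)\setminus\{1\}$, and $Q(U)\setminus\{1\}$ are pairwise disjoint: a common element of two of them would lie in the corresponding pairwise intersection $Q(X)\cap Q(Y)=\{1\}$, hence would equal $1$, contradicting its removal. Consequently the union decomposes as a disjoint union
\[
Q(S)\cup Q(T)\cup Q(U) = \{1\}\,\sqcup\,(Q(S)\setminus\{1\})\,\sqcup\,(Q(T)\setminus\{1\})\,\sqcup\,(Q(U)\setminus\{1\}),
\]
whose cardinality is therefore $1+(|Q(S)|-1)+(|Q(T)|-1)+(|Q(U)|-1)=|Q(S)|+|Q(T)|+|Q(U)|-2$.

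Finally I would bound this union from above: since it is a subset of $G$, its cardinality is at most $|G|$. Combining this with the count above yields $|Q(S)|+|Q(T)|+|Q(U)|-2\leq|G|$, which is the asserted inequality after rearranging. (Equivalently, one could invoke the inclusion--exclusion formula directly, using that all three pairwise intersections equal $\{1\}$ and that the triple intersection also equals $\{1\}$, the latter because it is contained in any one of the pairwise intersections.) There is essentially no obstacle here: all the mathematical content has already been packaged into Lemma~\ref{lemm:SchnittQ}, and once the pairwise disjointness of the punctured quotients is noted, the bound is a one-line count. The only point demanding a moment's care is the treatment of the identity, ensuring it is counted exactly once in the union rather than three times.
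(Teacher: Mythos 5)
Your proof is correct and takes essentially the same approach as the paper: both rest entirely on Lemma~\ref{lemm:SchnittQ} and the containment $Q(S)\cup Q(T)\cup Q(U)\subseteq G$, the paper applying inclusion--exclusion directly where you count via the equivalent disjoint decomposition into $\{1\}$ and the punctured quotients. The two bookkeeping styles are interchangeable, as you yourself note parenthetically.
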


\begin{proof}
Note that $Q(S)\cup Q(T) \cup Q(U) \subset G$ and
\begin{align*}
&|Q(S)\cup Q(T) \cup Q(U)|\\
&= |Q(S)|+|Q(T)|+|Q(U)| - |Q(S)\cap Q(T)| - |Q(T)\cap Q(U)|- |Q(S)\cap Q(U)|\\
 &+ |Q(S)\cap Q(T) \cap Q(U)|.
\end{align*}
Because of Lemma \ref{lemm:SchnittQ} all intersections have size $1$ and the statement follows.
\end{proof}

\begin{corr}[\textsc{Murthy}]\label{KOR}
If $S$, $T$ and $U$ fulfill the TPP then $|S|+|T|+|U| \leq |G| + 2$.
\end{corr}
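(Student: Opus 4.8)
The plan is to deduce this corollary directly from the two ingredients already established: Theorem~\ref{satz:OrderQ}, which bounds the combined size of the right quotients, and Lemma~\ref{lemm:BruteRechts}(3), which compares the size of a set with the size of its right quotient. The idea is that each of $S$, $T$, $U$ is no larger than its own right quotient, so a bound on the quotient sizes immediately transfers to a bound on the set sizes themselves.

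Concretely, I would first apply Lemma~\ref{lemm:BruteRechts}(3) separately to each of the three sets to obtain $|S|\leq|Q(S)|$, $|T|\leq|Q(T)|$, and $|U|\leq|Q(U)|$. Summing these three inequalities yields
\[
|S|+|T|+|U| \;\leq\; |Q(S)|+|Q(T)|+|Q(U)|.
\]
Then I would invoke Theorem~\ref{satz:OrderQ}, which gives $|Q(S)|+|Q(T)|+|Q(U)| \leq |G|+2$ under the hypothesis that $S$, $T$, $U$ satisfy the TPP. Chaining the two inequalities produces the claimed bound $|S|+|T|+|U| \leq |G|+2$.

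There is no genuine obstacle in this argument; it is a routine composition of two prior results, which is precisely why it appears as a corollary rather than a standalone theorem. The only point requiring a moment of care is that Lemma~\ref{lemm:BruteRechts}(3) must be applied to each set individually before summing, and that the TPP hypothesis is exactly what licenses the use of Theorem~\ref{satz:OrderQ}. All the substantive work—the three pairwise quotient intersections each having size one, and the resulting inclusion–exclusion count—has already been carried out in Lemma~\ref{lemm:SchnittQ} and Theorem~\ref{satz:OrderQ}, so nothing further is needed here.
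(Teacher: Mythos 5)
Your proposal is correct and follows exactly the paper's own argument: the paper's proof of Corollary~\ref{KOR} is precisely the one-line composition of Lemma~\ref{lemm:BruteRechts}(3) with Theorem~\ref{satz:OrderQ}, which you have simply written out in full detail.
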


\begin{proof}
The statement follows from Lemma \ref{lemm:BruteRechts}(3) and Theorem \ref{satz:OrderQ}.
\end{proof}

Note that Theorem \ref{satz:OrderQ} is more effective than Corollary \ref{KOR} when searching for TPP triples.

\section*{Acknowledgements}
\noindent I would like to thank \textsc{David J. Green} for our inspiring discussions.

\end{document}